\theoremstyle{plain}
\newtheorem{theorem}{Theorem}
\newtheorem*{corollary}{Corollary}
\newtheorem{proposition}[theorem]{Proposition}
\newtheorem{lemma}{Lemma}
\theoremstyle{remark}
\newtheorem*{example}{Example}
\newcommand{\refT}[1]{Theorem~\ref{T#1}}
\newcommand{\refP}[1]{Proposition~\ref{P#1}}
\newcommand{\refL}[1]{Lemma~\ref{L#1}}
\newcommand{\refS}[1]{Section~\ref{S#1}}
\numberwithin{equation}{section}
\newcommand{\ga}{\alpha}
\newcommand{\gb}{\beta}
\newcommand{\gd}{\delta}
\newcommand{\gre}{\varepsilon}
\newcommand{\gf}{\varphi}
\newcommand{\gt}{\tau}
\newcommand{\gu}{\theta}
\newcommand{\jj}{\mathcal{J}}
\newcommand{\unip}{\Phi^*_{p^aq^br^c}}
\newcommand{\uniP}{\Phi^*_{PQR}}
\newcommand{\qt}{Q_{\tau}}
\newcommand{\Qt}{Q_{\{p,q,r\}}}
\newcommand{\Qtt}{Q_{\{p^a,q^b,r^c\}}}
\newcommand{\QT}{Q_{\{P,Q,R\}}}
\newcommand{\At}{\mathcal{A}_{\{p,q,r\}}}
\newcommand{\Att}{\mathcal{A}_{\{p^a,q^b,r^c\}}}
\newcommand{\AT}{\mathcal{A}_{\{P,Q,R\}}}
\newcommand{\aat}{\mathcal{A}_\tau}
\DeclareMathOperator{\diam}{diam}
\newcommand{\ft}{\varphi(\tau)}
\newcommand{\lf}{\left\lfloor}
\newcommand{\rf}{\right\rfloor}
\newcommand{\la}{\langle}
\newcommand{\ra}{\rangle}
\begin{document}

\title[Unitary Cyclotomic Polynomials]{Coefficients of Unitary Cyclotomic Polynomials of Order Three}

\author[G. Bachman]{Gennady Bachman}
\address{Department of Mathematical Sciences\\ University of Nevada Las Vegas\\
4505 Maryland Parkway, Box 454020\\
Las Vegas, Nevada 89154-4020, USA}
\email{bachman@unlv.nevada.edu}

\date{October 2021}

\begin{abstract}
A unitary cyclotomic polynomial of order three is a polynomial of the form
\[
\uniP(x)=\frac{(x^{PQR}-1)(x^P-1)(x^Q-1)(x^R-1)}{(x^{PQ}-1)(x^{QR}-1)(x^{RP}-1)(x-1)},
\]
where $P$, $Q$ and $R$ are powers of three distinct primes $p$, $q$ and $r$. Fixing any such prime triple generates a family of these polynomials corresponding to all possible choices of $P=p^a$, $Q=q^b$ and $R=r^c$. We study the coefficients of polynomials in such a family. In particular, we show that the coefficients of polynomials in every such family cover all of $\mathbb{Z}$.
\end{abstract}

\subjclass[2020]{Primary 11B83; Secondary 11C08}

\keywords{cyclotomic polynomials, unitary cyclotomic polynomials, inclusion-exclusion polynomials}

\maketitle

\section{Introduction and Statement of Results}\label{S1}

If positive integers $p, q, r>2$ are relatively prime in pairs, the quotient 
\begin{equation}\label{a1}
\Qt(x)=\frac{(x^{pqr}-1)(x^p-1)(x^q-1)(x^r-1)}{(x^{pq}-1)(x^{qr}-1)(x^{rp}-1)(x-1)}
\end{equation}
is a polynomial, a ternary inclusion-exclusion polynomial. The principal special case of these polynomials are the ternary cyclotomic polynomials $\Phi_{pqr}(x)$, corresponding to the case where $p$, $q$ and $r$ are distinct odd primes. In this paper we address exclusively the ternary case, but the reader interested in the general case of inclusion-exclusion polynomials will find their definition and a discussion of some of their basic properties, such as the fact that $\Qt$ is a polynomial, in \cite{Ba1}.

Two recent papers \cite{Metal} and \cite{MT} emphasized another interesting special case of inclusion-exclusion polynomials, namely, what they dubbed the unitary cyclotomic polynomials. In the ternary case, the unitary cyclotomic polynomials are defined as follows. For three prime powers $p^a, q^b, r^c>2$ (so $p, q, r$ are distinct primes here) put
\[
\unip(x)=\Qtt(x).
\]
In other words, unitary cyclotomic polynomials of order three are polynomials $\QT$ where $P$, $Q$ and $R$ are restricted to be prime powers. The use of the term \emph{unitary} is explained by the fact that all the powers of $x$ appearing on the right in the definition \eqref{a1} of $\QT(x)=\uniP(x)$ are \emph{unitary divisors} of $n=PQR$.

We will avoid introducing too many letters and will continue to use letters $p$, $q$ and $r$ in their dual roles---they are simply relatively prime in pairs when used in the notation $\Qt$, but they are distinct primes when used in the notation $\unip$, $P=p^a$, $\uniP$, etc. Let $\At$ be the set of coefficients of the polynomial $\Qt$. Perhaps the most obvious question about the coefficients of unitary cyclotomic polynomials as a distinct class, is what can be said about the family $\Att$ generated by a fixed triple of primes $p$ $q$ and $r$? In particular, Moree and T\'oth \cite{MT} raised the question of whether the size of coefficients in every such family grows with out a bound? We shall answer this question in the affirmative and prove the following result.

\begin{theorem}\label{T1}
Fix an arbitrary triple of primes $p$, $q$, and $r$ and consider the family of unitary cyclotomic polynomials $\unip$ it generates. For each $\gre>0$ and all sufficiently large exponents $a\ge a_{\gre}$, there exist exponents $b$ and $c$ such that
\begin{equation}\label{a2}
\Att\supset\{\,n : |n|\le (\textstyle{\frac14}-\gre)p^a\,\}.
\end{equation}
\end{theorem}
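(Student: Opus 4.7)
The plan is to combine an explicit Lam--Leung--Kaplan--style formula for the coefficients of the ternary inclusion-exclusion polynomial $\QT$ with a careful choice of the exponents $b$ and $c$, so that the resulting $\unip$ exhibits a ``jump sequence'' of coefficients that sweeps through every integer in the interval on the right-hand side of \refE{a2}.

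I would start from a description (as in \cite{Ba1} for the inclusion-exclusion setting) of the coefficient $a_n$ of $\QT$ as a signed count of lattice points $(i,j)$ with $iQ+jR\equiv n\pmod{P}$, subject to inequality constraints governed by the inverses $Q'\equiv Q^{-1}\pmod{P}$ and $R'\equiv R^{-1}\pmod{P}$. This turns the coefficient-set question into a lattice-point problem whose geometry is controlled by $Q'$ and $R'$, and in particular reveals that the largest coefficients appear when these inverses lie near $P/2$.

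Next, I would choose the exponent $b$. The cyclic subgroup $\langle q\rangle\subset(\mathbb{Z}/p^a\mathbb{Z})^\times$ has index bounded by a constant depending only on $p$ and $q$: its order is $\mathrm{ord}_p(q)\cdot p^{a-k}$ for $a$ past some threshold $k=k(p,q)$. By standard equidistribution of power residues (or an elementary character-sum estimate), its elements meet every interval of length $\varepsilon p^a$ once $a$ is sufficiently large, so $b$ can be chosen with $Q^{-1}\bmod p^a$ within $\varepsilon p^a$ of $p^a/2$. An analogous argument selects $c$ with $R^{-1}\bmod p^a$ close to $p^a/2$ while simultaneously controlling $R\bmod PQ$ as the formula requires.

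With $Q',R'\approx p^a/2$, the lattice-point formula makes the sequence $\{a_n\}$ behave like a piecewise-linear sawtooth in $n$; its consecutive differences are $0$ or $\pm 1$, so the achieved coefficient values form an interval of consecutive integers. The extremes of the sawtooth are governed by a product of the shape $Q'(p^a-Q')/p^a$, which equals $p^a/4$ at $Q'=p^a/2$, yielding the interval $|n|\le\bigl(\tfrac{1}{4}-\varepsilon\bigr)p^a$ after absorbing the approximation error from the residue step. The technical heart---and the step I expect to be hardest---is the simultaneous coordination of the residue choices: the formula depends on $Q\bmod P$, $R\bmod P$, and $R\bmod PQ$, but $R$ is restricted to prime-power values, so realizing all the required residue conditions with a single pair $(b,c)$, and then verifying that the resulting sawtooth has no gap across the full claimed interval, is where the main effort must go.
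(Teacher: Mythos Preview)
Your proposal has the right high-level architecture---reduce to controlling certain residues of $Q$ and $R$ modulo $P$ (and of $R$ modulo $Q$), then use equidistribution of prime-power residues to realize those conditions---but there is a genuine gap in the coefficient-analysis step that would derail the argument as written. The claim that placing $Q'\equiv Q^{-1}$ and $R'\equiv R^{-1}$ each near $P/2$ makes the coefficient sequence a sawtooth with extremes of size $Q'(P-Q')/P\approx P/4$ is not supported by any ternary formula I know, and in the paper's own coefficient machinery it is false without an additional coupling. In the framework of \refS3, the size of the coefficient one can produce is $C=\lfloor\mu/u\rfloor$, where $\mu=\min(x_Q,x_R,P-x_Q,P-x_R)$ and $u=\langle x_Qx_R\rangle_P$ is the least positive residue of $(QR)^{-1}\pmod P$. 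Having $x_Q,x_R$ near $P/2$ makes $\mu$ large but does nothing to control $u$; if $u$ is of order $P$ then $C=O(1)$. The paper therefore enforces the extra constraint $u=1$, i.e.\ $QR\equiv1\pmod P$, which \emph{couples} the choices of $b$ and $c$ modulo $\varphi(P)$. With that coupling, $x_Q$ and $x_R$ are inverses of one another mod $P$, and one can only ask that both lie in the interval $((\tfrac14-\varepsilon)P,(\tfrac34+\varepsilon)P)$; a pigeonhole over this interval and its complement (\refL4) is exactly where the constant $\tfrac14$ comes from. Your independent placement of $Q'$ and $R'$ near $P/2$ misses this mechanism.

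A second, smaller point concerns the equidistribution input. The paper also needs the parameter $v=\langle y_Py_R\rangle_Q$ to lie in an interval of length $Q/P^2$ (condition \refE{c13}), which is genuinely short relative to $Q$. A P\'olya--Vinogradov-type ``elementary character-sum estimate'' does not reach intervals that short; the paper invokes Korobov's theorem on the distribution of $a\cdot g^n\pmod{p^\nu}$ (\refL3), which handles intervals down to length $p^{\theta\nu}$ for any fixed $\theta>\tfrac12$. You will need a result of comparable strength for the step you flag as ``controlling $R\bmod PQ$''; this is also the tool that makes the coupled choice in the previous paragraph feasible.
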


\begin{corollary} $\displaystyle\bigcup_{a,b,c}\Att=\mathbb{Z}.$
\end{corollary}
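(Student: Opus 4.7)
The corollary is an essentially immediate consequence of \refT{1}. The plan is to fix an arbitrary integer $n \in \mathbb{Z}$ and exhibit an exponent triple $(a,b,c)$ for which $n$ appears as a coefficient of $\unip$; every such coefficient lies in $\Att$, so this will show $\mathbb{Z} \subseteq \bigcup_{a,b,c}\Att$, and the reverse inclusion is trivial.

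First, I would fix $\gre = 1/8$ (any fixed $\gre \in (0, 1/4)$ would do) and apply \refT{1} with this value of $\gre$. This yields a threshold $a_{\gre}$ such that for every $a \ge a_{\gre}$ there exist exponents $b=b(a)$ and $c=c(a)$ with
\[
\Att \supset \bigl\{\,m \in \mathbb{Z} : |m| \le \tfrac{1}{8} p^a\,\bigr\}.
\]
Next, I would choose $a$ large enough that both $a \ge a_{\gre}$ and $p^a \ge 8|n|$ hold; this is always possible since $p \ge 2$. For such an $a$, together with the corresponding $b$ and $c$ supplied by the theorem, the displayed inclusion gives $n \in \Att$ and hence $n \in \bigcup_{a,b,c}\Att$. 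Since $n \in \mathbb{Z}$ was arbitrary, this establishes the corollary.

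There is no substantive obstacle in this derivation, and it is worth noting that the corollary does not exploit the full strength of \refT{1}: any fixed positive constant in place of $\tfrac14-\gre$ on the right-hand side of \refE{a2} would be enough to conclude $\bigcup_{a,b,c} \Att = \mathbb{Z}$. The sharp constant $\tfrac14$ is where the substantive work of the paper resides, while the corollary is merely a qualitative consequence of the quantitative main theorem.
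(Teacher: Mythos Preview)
Your proof is correct and matches the paper's treatment: the paper states the corollary immediately after \refT{1} without a separate proof, regarding it as a direct consequence, and your argument spells out exactly the obvious derivation. Your closing remark that only a fixed positive constant (not the sharp $\tfrac14$) is needed is also accurate and worth noting.
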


Our proof of \refT1 can be readily modified to yield the following one-sided version of this theorem.

\begin{theorem}\label{T2}
In addition to the conclusion \eqref{a2} of \refT{1}, it is also true that each of the two set inclusions
\[
\Att\supset\pm\{\,n:-1\le n\le (\textstyle{\frac12}-\gre)p^a\,\}
\]
holds for all $a\ge a_{\gre}$ and suitably chosen exponents $b$ and $c$.
\end{theorem}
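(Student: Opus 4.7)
My plan is to derive \refT{2} as an asymmetric variant of the construction used for \refT{1}. The factor $\tfrac14$ in \refT{1} most plausibly arises because the underlying construction produces a window of attainable coefficient values that is forced to be symmetric about $0$, so that both positive and negative targets are captured simultaneously; each half of that window then has length roughly $\tfrac14 p^a$. If we give up symmetry and aim only at one sign at a time, the same window can be slid entirely to one side of $0$, yielding a one-sided range of length roughly $\tfrac12 p^a$, which is precisely the improvement claimed by \refT{2}.

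Concretely, I would revisit the formula or combinatorial description used to prove \refT{1}, which presumably expresses a particular coefficient of $\unip$ in terms of $a$, $b$, $c$ together with some ``shift'' or centering parameter governing where the window of realizable values sits on the number line. I would then re-choose $b$, $c$, and this shift so that the resulting interval of realizable values is $[-1,(\tfrac12-\gre)p^a]$ rather than $[-(\tfrac14-\gre)p^a,(\tfrac14-\gre)p^a]$. The single value $-1$ at the lower end should appear either directly from the shifted construction, which will naturally produce the first value just below $0$, or from the standard fact that $-1$ occurs as a coefficient of the ternary inclusion-exclusion polynomial $\Qtt$ for essentially any choice of $(a,b,c)$. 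A parallel choice---either by exchanging the roles of two of the primes in the construction or by invoking the palindromic symmetry of $\unip$---supplies the mirror inclusion $\Att\supset-\{n:-1\le n\le(\tfrac12-\gre)p^a\}$.

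The step I expect to require the most care is verifying that the improvement from $(\tfrac14-\gre)p^a$ to $(\tfrac12-\gre)p^a$ really happens after re-centering, rather than leaving a gap of size comparable to $p^a$ between the intended window and the region actually covered by the construction. This hinges on the error terms in the proof of \refT{1} being controlled by $\gre$ alone and not by the particular target coefficient, so that doubling the window width is absorbed into a slightly larger choice of $a_{\gre}$. I expect this uniformity to be a built-in feature of the explicit parametric construction suggested by the statement of \refT{1}, and once it is confirmed, the remaining verification reduces to bookkeeping about endpoints and residue classes mod the primes involved.
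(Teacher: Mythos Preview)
The paper does not actually prove \refT{2}; it only asserts that ``the required modifications to the proof of \refT{1} actually make the argument slightly simpler'' and omits the details. Your proposal is at a comparable level of detail and correctly identifies the overall strategy---relax the two-sided requirement so that the threshold doubles from $\tfrac14$ to $\tfrac12$---so in spirit you match the paper.

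That said, two of the specific mechanisms you propose would not work as stated. First, the ``window-sliding'' picture is not what is happening: the coefficient set $\aat$ is always a block of consecutive integers, and Proposition~\ref{P5} pins it down by exhibiting a coefficient $\ge C$ and a coefficient $\le -C$ \emph{for the same} $\gt$. The constant $\tfrac14$ enters only through Lemma~\ref{L4}, where one must place \emph{both} $g^{[\ga,\gb]i}$ and $g^{-[\ga,\gb]i}$ into a common interval $\jj$, forcing $|\jj|>P/2$ and hence the centred choice $\jj=((\tfrac14-\gre)P,(\tfrac34+\gre)P)$. The one-sided improvement comes from relaxing this Lemma~\ref{L4} condition (and correspondingly the use of $\mu=\min(x_{q'},x_{r'})$ in Proposition~\ref{P5}), not from translating an already-constructed interval of coefficients. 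Second, palindromic symmetry gives $a_m=a_{\ft-m}$, which leaves $\aat$ unchanged and cannot convert the $+$ inclusion into the $-$ inclusion; nor does permuting $p,q,r$, since $\Qt$ is symmetric in them. The negative inclusion instead comes from a parallel run of the modified Proposition~\ref{P5}: at the end of that proof the sign of the large coefficient produced depends on whether $q'=\pm q$ and $r'=\pm r$, and by choosing $b,c$ so as to control the residues $x_Q,x_R\pmod P$ one can force either sign. Your observation that the stray endpoint $-1$ (resp.\ $+1$) is available from general facts about ternary inclusion-exclusion polynomials is fine, since $\aat$ is a set of consecutive integers containing $\{-1,0,1\}$.
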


The required modifications to the proof of \refT1 actually make the argument slightly simpler and we will not include the proof here. In addition to these general results, we shall also give a quick proof of the following striking special case in which primes $p$, $q$ and $r$ cooperate.

\begin{theorem}\label{T3}
Let two primes $p$ and $q$, $p<q$, satisfy the conditions: (i) $p\equiv q\equiv 3\text{ or }7\pmod8$, (ii) $\gcd(p-1,q-1)=2$, and (iii) $q$ is a primitive root modulo $p^2$. Suppose further that a third prime $r$ is a primitive root modulo $p^2$ and $q^2$. Then for each exponent $a$, there are exponents $b$ and $c$ such that
\begin{equation}\label{a3}
\Att=\{\,n : -\frac{p^a-1}2\le n\le\frac{p^a+1}2\,\}.
\end{equation}
\end{theorem}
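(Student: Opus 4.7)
The plan is to combine an explicit description of the coefficient set $\Att$ (in the form developed for ternary inclusion–exclusion polynomials in \cite{Ba1}) with the flexibility afforded by the primitive–root hypotheses to choose $b$ and $c$ realizing the target endpoints. For pairwise coprime $P<Q<R$ the set of coefficients of $Q_{\{P,Q,R\}}$ is an interval of consecutive integers whose endpoints are explicit functions of the residues $Q\bmod P$, $R\bmod P$, and $R\bmod Q$. Specialising to $P=p^a$, $Q=q^b$, $R=r^c$, the strategy is to select $b$ and $c$ so that these three residues are the specific values that force the interval to coincide with $\{\,n: -(p^a-1)/2 \le n \le (p^a+1)/2\,\}$.

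First, hypothesis (iii) says that $q$ is a primitive root modulo $p^2$; by the standard lifting theorem this upgrades to $q$ being a primitive root modulo $p^a$ for every $a\ge 2$, so by choosing $b$ we can force $q^b$ to equal any prescribed unit class modulo $p^a$. With $b$ fixed, hypothesis (iii) applied to $r$, together with the same lifting result, makes $r$ a primitive root modulo $p^a$ and modulo $q^b$. The orders of $r$ in these two groups are $p^{a-1}(p-1)$ and $q^{b-1}(q-1)$, whose gcd is $\gcd(p-1,q-1)=2$ by (ii). Consequently, as $c$ varies, the joint residue $(r^c\bmod p^a,\, r^c\bmod q^b)$ ranges over a subgroup of index $2$ in $(\mathbb Z/p^a)^{\times}\times(\mathbb Z/q^b)^{\times}$.

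Hypothesis (i) now enters: the above index–two subgroup is the kernel of a quadratic character, and $p\equiv q\equiv 3$ or $7\pmod 8$ gives precisely $\bigl(\tfrac{-1}{p}\bigr)=\bigl(\tfrac{-1}{q}\bigr)=-1$ together with $\bigl(\tfrac{2}{p}\bigr)=\bigl(\tfrac{2}{q}\bigr)=-1$. The specific residues that produce endpoints $\pm\tfrac{p^a\pm 1}{2}$ in the coefficient formula will involve only $\pm 1$ and $\pm 2$ in essential ways, so the two character identities above should exactly match the compatibility condition cutting out the index–two subgroup, and the target $c$ will exist.

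The chief obstacle will be this last compatibility check: verifying that the residues producing the prescribed endpoints indeed carry the right quadratic character so that a single $c$ can realise them simultaneously modulo $p^a$ and modulo $q^b$. After that check is done, everything else is bookkeeping: plugging the chosen residues into the formula for $\Att$ from \cite{Ba1} delivers the full interval of consecutive integers in \eqref{a3}. In contrast to \refT{1}, no lower bound on $a$ is needed, which is precisely the sense in which the primes $p,q,r$ are said to ``cooperate.''
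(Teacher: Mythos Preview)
Your plan is the paper's own approach: reduce to a known coefficient-set result for ternary inclusion--exclusion polynomials determined by prescribed residues of $Q$ and $R$, then use the primitive-root hypotheses together with the CRT obstruction at $\gcd(\varphi(P),\varphi(Q))=2$ to realise those residues. Two concrete corrections are needed before the sketch becomes a proof.

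First, the input you want is not the general material in \cite{Ba1} but a specific result from \cite{Ba3} (stated here as \refP{4}): if $P<Q,R$ with $P,Q$ odd, $Q\equiv 2\pmod P$ and $R\equiv\frac{PQ-1}{2}\pmod{PQ}$, then $\AT$ is exactly the interval in \eqref{a3}. So the target residues are $Q\equiv 2\pmod P$, $R\equiv\frac{P-1}{2}\pmod P$ and $R\equiv\frac{Q-1}{2}\pmod Q$; you should name them rather than leave them implicit.

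Second, your quadratic-character bookkeeping is wrong as stated: for $p\equiv 7\pmod 8$ one has $\bigl(\tfrac{2}{p}\bigr)=+1$, not $-1$. The symbol that actually governs the compatibility is $\bigl(\tfrac{-2}{p}\bigr)$, which equals $+1$ for $p\equiv 1,3\pmod 8$ and $-1$ for $p\equiv 5,7\pmod 8$. Concretely, if $r^i\equiv\frac{P-1}{2}\pmod P$ then $r^{-i}\equiv -2\pmod P$, so the parity of $i$ is decided by whether $-2$ is a square mod $p$; likewise for $j$ with $q$ in place of $p$. Hypothesis~(i), namely $p\equiv q\pmod 8$, then forces $i\equiv j\pmod 2$, which is exactly the compatibility needed to solve $c\equiv i\pmod{\varphi(P)}$ and $c\equiv j\pmod{\varphi(Q)}$ simultaneously. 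With these two fixes your outline is the paper's proof.
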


The special merit of this result lies in the fact that we know \cite[Corollary 3]{Ba2} that the diameter of the set of coefficients of any polynomial $\Qt$ satisfies the inequality
\begin{equation}\label{a4}
\diam\At\le\min(p,q,r),
\end{equation}
where by diameter we mean, of course, the difference between the largest and the smallest coefficients. Restating this for the special case of unitary polynomials and using the notation $\uniP$, we see that $\diam\AT\le P$, and that for unitary families covered by \refT3 this holds with equality for every choice of prime power $P$ and suitably chosen prime powers $Q$ and $R$. It should be mentioned that we take the liberty of restating some of the earlier results on cyclotomic polynomials, such as \eqref{a4}, as valid for the entire class of inclusion-exclusion polynomials if the arguments used to prove them carry over to this larger class. We shall continue this practice below without drawing reader's attention to this distinction.

Even though the hypothesis of \refT3 are rather technical, they amount to imposing certain arithmetic progression requirements on the primes in question, and the result is certainly non vacuous. Indeed, to give a prime triple fulfilling these requirements we would first select a prime $p\equiv3\text{ or }7\pmod8$. This choice yields three requirements for our second prime $q$, the first of which $q\equiv p\pmod8$ requires no comment. The second requirement is equivalent to $(q-1,\frac{p-1}2)=1$ and is certainly satisfied by taking $q\equiv2\pmod{\frac{p-1}2}$. (Note that this requirement is vacuous for $p=3$.) The final requirement is that $q\equiv g\pmod{p^2}$, where $g$ is any primitive root mod~$p^2$. It is well known (see, for example, \cite[Theorem 2.39]{NZM}) that there are $(p-1)\gf(p-1)$ primitive roots mod~$p^2$, where $\gf(n)$ is the Euler's totient function, and we may fix any particular one of them in this congruence. So we see that to choose prime $q$ we must choose it satisfying three congruences to the three moduli 8, $\frac{p-1}2$ (which is odd) and $p^2$. By the Chinese remainder theorem and Dirichlet’s theorem for primes in arithmetic progressions (see, for example \cite{Da}), this can always be accomplished. Analogous considerations yield two congruences $r\equiv g_1\pmod{p^2}$ and $r\equiv g_2\pmod{q^2}$ to be satisfied by the third prime $r$, where $g_i$ are primitive roots to the corresponding moduli. We can always make such selection, by another appeal to the Dirichlet’s theorem for primes in arithmetic progressions.

As the ''smallest'' example of such a triple, we start by taking $p=3$. 2 is a primitive root mod~$3^2$, and $q=11$ satisfies the congruences $q\equiv3\pmod8$ and $q\equiv2\pmod{3^2}$. Conveniently, 2 is also a primitive root mod~$11^2$ and we may take $r=2$ as our third prime yielding the triple $(p,q,r)=(3,11,2)$. If one wishes instead to complete this construction by choosing an odd prime $r$, one can do so by taking $r=29$, since 29 is also a primitive root mod~$11^2$ and $29\equiv2\pmod{3^2}$. It is interesting to note that these examples give us an easy way to illuminate the fact that the behaviour of the coefficients of ternary inclusion-exclusion (cyclotomic) polynomials is rather nontrivial. Indeed, as a corollary of \refT3 and a result due to Kaplan \cite{Ka} (see also \cite{BM}) we give the following example.

\begin{example}
Consider the family of unitary polynomials $\Phi^*_{3^a11^b2^c}$. For every pair of exponents $a$ and $b$ we can take $c$ such that $2^c\equiv1\pmod{3^a11^b}$, and for every such triple $a, b, c$ we have
\[
\mathcal{A}_{\{3^a,11^b,2^c\}}=\{-1, 0, 1\}.
\]
On the other hand, for each $a$ we can choose $b$ and $c$ so that
\[
\mathcal{A}_{\{3^a,11^b,2^c\}}=\{\,n:-\frac{3^a-1}2\le n\le\frac{3^a+1}2\,\}.
\]
Replacing $2^c$ with $29^c$ yields the same conclusions.
\end{example}

In the next section we give a proof of \refT3 which easily reduces to a known result on inclusion-exclusion polynomials. In the remainder of the paper we give a proof of \refT1 which requires a considerably greater effort.

\section{The case of Cooperating Primes}\label{S2}

Our proof of \refT3 rests on \refP4 below. This is an assertion about inclusion-exclusion polynomials and the reader will recall that the letters $p$, $q$ and $r$ here need not be prime. The proposition may be viewed as half of the result proved by this author in \cite[Section 3]{Ba3}.

\begin{proposition}\label{P4}
Assume that $p<q,r$, that $p$ and $q$ are odd, and that
\[
q\equiv2\pmod p,\quad r\equiv\frac{pq-1}2\pmod{pq}.
\]
Then the set of coefficients of $\Qt$ is
\[
\At=\{\,n:-\frac{p-1}2\le n\le\frac{p+1}2\,\}.
\]
\end{proposition}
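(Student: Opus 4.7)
The plan is to derive the proposition from a closed-form expression for the coefficients of $\Qt$ in terms of the residues $q \bmod p$ and $r \bmod pq$, of the kind developed in the author's earlier paper on ternary inclusion--exclusion polynomials. The starting point is the algebraic identity
\[
\Qt(x) = \frac{Q_{\{p,q\}}(x^r)}{Q_{\{p,q\}}(x)},
\]
readily checked from the definitions, together with the Lam--Leung/Lenstra-type numerical representation of the binary polynomial $Q_{\{p,q\}}$. Under $q \equiv 2 \pmod p$, so that $q^{-1} \equiv (p+1)/2 \pmod p$, this representation takes a particularly symmetric form, splitting the support of $Q_{\{p,q\}}$ into a ``positive block'' and a ``negative block'' whose dimensions are cleanly controlled by $(p\pm 1)/2$.

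Next I would exploit $r \equiv (pq-1)/2 \pmod{pq}$, equivalently $2r+1 \equiv 0 \pmod{pq}$: multiplication by $r$ modulo $pq$ acts as negation-then-halving on residue classes, so the exponents of $Q_{\{p,q\}}(x^r)$ reduce modulo $pq$ in a predictable interlocking pattern with the support of $Q_{\{p,q\}}$. From the convolution
\[
Q_{\{p,q\}}(x^r) = Q_{\{p,q\}}(x) \cdot \Qt(x)
\]
one then reads off the coefficient $c_n$ of $x^n$ in $\Qt$ as a difference $A(n)-B(n)$ of two nonnegative integer counts, each bounded above by $(p-1)/2$ and $(p+1)/2$ respectively.

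Finally, I would verify that as $n$ ranges over $\{0,1,\ldots,\deg\Qt\}$, the pair $(A(n),B(n))$ realizes every feasible combination, so that $c_n$ attains every integer in $\{-(p-1)/2,\ldots,(p+1)/2\}$. That no coefficient escapes this window is forced by the diameter bound $\diam\At \le p$ recorded in \eqref{a4}, together with the observation that both $0$ and $1$ appear as coefficients, which pins one endpoint of any window of length $p$.

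The main obstacle I anticipate is the bookkeeping in the second step: tracking how the two-block structure of $Q_{\{p,q\}}$ interlocks with the multiplication-by-$r$ map, and in particular showing that the resulting range of $c_n$ is the asymmetric interval $[-(p-1)/2,(p+1)/2]$ rather than a symmetric one. The ``$+1$'' on the positive side is the footprint of the $-1$ inside $(pq-1)/2$ and must be extracted carefully. The remaining steps---the identity, Lenstra's representation, and the attainment check---should be routine once the formula for $c_n$ is set up cleanly.
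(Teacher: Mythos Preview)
The paper does not prove \refP4 in place; it simply cites \cite[Section~3]{Ba3}, and from the machinery reproduced in \refS3 of the present paper (Lemmas~\ref{L1} and~\ref{L2}, both attributed to \cite{Ba3}) one sees that the original argument proceeds via the characteristic function $\chi$, the auxiliary function $f(n)=x_nq+y_np$, and the sums $S(m)$, computing these explicitly under the congruence hypotheses. Your route is genuinely different: you exploit the factorisation $\qt(x)=Q_{\{p,q\}}(x^r)/Q_{\{p,q\}}(x)$ together with the Lam--Leung description of the binary polynomial and then analyse the convolution directly. The factorisation identity is correct and the Lam--Leung block structure under $q\equiv 2\pmod p$ (so $q^{-1}\equiv(p+1)/2$) is as you describe, so the plan is viable; what it buys is a more ``visual'' picture in terms of overlapping rectangular blocks, at the cost of the bookkeeping you flag as the obstacle. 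The $\chi$/$f$ approach of \cite{Ba3} trades that bookkeeping for inequalities of the type seen in the proof of \refP5. One small logical wobble: knowing only that $\diam\aat\le p$ and that $0,1\in\aat$ does \emph{not} pin the window---it leaves the left endpoint anywhere in $[1-p,0]$. What actually closes the argument is your attainment step: once every value in $[-(p-1)/2,(p+1)/2]$ is shown to occur, the diameter bound alone forces equality, and the appeal to ``$0$ and $1$'' is superfluous.
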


\begin{proof}[Proof of \refT3]
Fix an arbitrary power $P=p^a$. Since $q$ is a primitive root modulo $p^2$, it is also a primitive root modulo $P$, see, for example, \cite[Theorem 2.40]{NZM}. This allows us to choose another power $Q=q^b$ such that $Q\equiv2\pmod P$, where $b$ is determined modulo $\gf(P)$. Similarly, our assumption on $r$ guarantees that the congruences
\begin{equation}\label{b1}
r^i\equiv\frac{P-1}2\pmod P\quad\text{and}\quad r^j\equiv\frac{Q-1}2\pmod Q
\end{equation}
have solutions $i\pmod{\gf(P)}$ and $j\pmod{\gf(Q)}$. Observe that if we can solve simultaneous congruences
\begin{equation}\label{b2}
c\equiv i\pmod{\gf(P)}\quad\text{and}\quad c\equiv j\pmod{\gf(Q)}
\end{equation}
for $c$, then the triple of powers $P$, $Q$ and $R=r^c$ ($>P$) satisfies the requirements of \refP4, since the congruences
\[
R\equiv\frac{P-1}2\pmod P\quad\text{and}\quad R\equiv\frac{Q-1}2\pmod Q
\]
are equivalent to the single congruence $R\equiv\frac{PQ-1}2\pmod{PQ}$. We can then apply \refP4 to the polynomial $\QT$ to reach the desired conclusion. Thus it only remains to solve \eqref{b2}.

The obstruction to solving \eqref{b2} is that $\gf(P)$ and $\gf(Q)$ are not coprime. In fact
\[
\gcd(\gf(P),\gf(Q))=\gcd(p-1,q-1)=2,
\]
by our hypothesis. It follows that \eqref{b2} is soluble if and only if $i$ and $j$ have the same parity. Observe that by \eqref{b1}, $r^{\gf(P)-i}\equiv-2\pmod P$, where we assume, as we may, that $i<\gf(P)$. This shows that $2\mid i$ if and only if $-2$ is a quadratic residue modulo $P$, whence modulo $p$. Using the well-known evaluation of the Legendre symbol $\left(\frac{-2}p\right)=(-1)^{\frac{p-1}2+\frac{p^2-1}8}$ \cite[Theorems 3.1 and 3.3]{NZM}, we conclude that, for $p\equiv3\text{ or }7\pmod8$, $i$ is even if $p\equiv3\pmod8$ and odd if $p\equiv7\pmod8$. Of course, exactly the same analysis applies to the exponent $j$, and since $p\equiv q\pmod8$ we see that indeed $i$ and $j$ must be of the same parity. This completes the proof of the theorem.
\end{proof}

\section{General Case: Preliminaries}\label{S3}

In this section we prepare background material on inclusion-exclusion polynomials we will need for the proof of \refT1. We begin by introducing some basic notation. It is convenient to put $\gt=\{p,q,r\}$ and to write $\qt$ in place of $\Qt$. It is easy to see (\cite{Ba1}) that the degree of $\qt$ is given by
\[
\ft\coloneqq(p-1)(q-1)(r-1)
\]
and we write
\begin{equation}\label{c2}
\qt(x)=\sum_{m=0}^{\ft}a_mx^m \qquad[a_m=a_m(\gt)].
\end{equation}
It readily follows from the definition \eqref{a1} that (see \cite[(3.8)]{Ba1})
\begin{equation}\label{c3}
\begin{aligned}
\qt(x)\equiv (1-x^q&-x^r+x^{q+r})(1+x+x^2+\dots+x^{p-1})\\
& \times\sum_{0\le i<p}x^{iqr}\sum_{0\le j<q}x^{jpr}\sum_{0\le k<r}x^{kpq} \pmod{x^{\ft+1}},
\end{aligned}
\end{equation}
and from this representation it is evident that the key to this problem is understanding the nonnegative linear combinations of $qr$, $pr$ and $pq$. Observe that each integer $n$ has a unique representation in the form
\begin{equation}\label{c4}
n=x_nqr+y_npr+z_npq+\gd_npqr,
\end{equation}
with $0\le x_n<p$, $0\le y_n<q$, $0\le z_n<r$, and $\gd_n\in\mathbb{Z}$, so that $n\mapsto(x_n,y_n,z_n,\gd_n)$ is well defined. Furthermore, we have
\begin{equation}\label{c5}
x_n\equiv x_1n\pmod p,\quad y_n\equiv y_1n\pmod q,\quad z_n\equiv z_1n\pmod r. 
\end{equation}
Observe further that for $n\le\ft$, we have $\gd_n\le0$. Thus for the application in \eqref{c3} we are interested in $n$ with $\gd_n=0$ and we let $\chi(n)$ be the characteristic function of such integers, that is,
\begin{equation}\label{c6}
\chi(n)= \begin{cases} 1, &\text{if $\gd_n=0$}\\
 0, &\text{otherwise.} \end{cases}
\end{equation}
Using this function in \eqref{c3} we express the coefficients of $\qt$ as follows (\cite[Lemma 1]{Ba3}).

\begin{lemma}\label{L1}
Put $S(m)=\sum_{m-p<n\le m}\chi(n)$. Then we have
\[
a_m=S(m)-S(m-q)-S(m-r)+S(m-q-r).
\]
\end{lemma}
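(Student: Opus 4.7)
The plan is to extract $a_m$ by peeling off the three factors on the right of \refE{c3} one at a time. First I would examine the triple sum
\[
T(x):=\sum_{0\le i<p}x^{iqr}\sum_{0\le j<q}x^{jpr}\sum_{0\le k<r}x^{kpq},
\]
whose coefficient of $x^n$ counts triples $(i,j,k)$ with $n=iqr+jpr+kpq$, $0\le i<p$, $0\le j<q$, $0\le k<r$. By the uniqueness in \refE{c4}, such a triple exists (and is then unique) precisely when the unique representation of $n$ in \refE{c4} has $\gd_n=0$; by \refE{c6} this is the same as $\chi(n)=1$. Hence $T(x)=\sum_{n\ge0}\chi(n)\,x^n$, and extending $\chi$ by $0$ to negative indices we may freely drop the restriction $n\ge0$ when convenient.

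Next I would multiply by the factor $1+x+\dots+x^{p-1}$. The coefficient of $x^m$ in $(1+x+\dots+x^{p-1})T(x)$ equals
\[
\sum_{0\le\ell<p}\chi(m-\ell)=\sum_{m-p<n\le m}\chi(n)=S(m).
\]
Finally, multiplying by $1-x^q-x^r+x^{q+r}$ contributes exactly the four-term combination $S(m)-S(m-q)-S(m-r)+S(m-q-r)$ as the coefficient of $x^m$.

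Since \refE{c3} is only a congruence modulo $x^{\ft+1}$, the last thing to verify is that this computation really recovers $a_m$ for every $m$ in the range $0\le m\le\ft$; this is immediate from the definition \refE{c2}. The one nontrivial ingredient in the argument is the identification of $T(x)$ with the generating function of $\chi$, which reduces to the already-recorded uniqueness of the representation \refE{c4}; everything else is routine polynomial multiplication, so I do not foresee any serious obstacle here.
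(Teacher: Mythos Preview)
Your argument is correct and is exactly the derivation the paper has in mind: it sets up \refE{c3}, records the uniqueness of the representation \refE{c4}, and then states \refL{1} with a citation to \cite[Lemma~1]{Ba3} rather than writing out the proof. Your identification of the coefficient of $x^n$ in the triple sum with $\chi(n)$ via that uniqueness, followed by the two routine convolutions, is precisely how the lemma is obtained from \refE{c3}; note also that your extension of $\chi$ by $0$ to negative $n$ is consistent with \refE{c6}, since $\gd_n=0$ forces $n\ge0$.
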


Next we introduce the main tool of this section, the arithmetic function
\begin{equation}\label{c7}
f(n)=x_nq+y_np,
\end{equation}
based on the representation \eqref{c4}. With the aid of this function we can give the following useful characterization of the function $\chi$.

\begin{lemma}\label{L2}
If $n\le\ft$, then $\chi(n)=1$ if and only if $f(n)\le\lf n/r\rf$.
\end{lemma}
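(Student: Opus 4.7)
The plan is to manipulate the defining representation \refE{c4} directly to isolate the quantity $f(n)r$. Grouping the first two summands of $n = x_nqr + y_npr + z_npq + \gd_npqr$, we get the key identity
\[
n - f(n)r = z_npq + \gd_npqr = pq(z_n + r\gd_n).
\]

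For the forward direction, if $\chi(n)=1$, then $\gd_n=0$, so the right side equals $z_npq \ge 0$. Thus $f(n) \le n/r$, and since $f(n)$ is a nonnegative integer, $f(n) \le \lf n/r \rf$.

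For the reverse direction, I would first invoke the observation (already noted just before \refE{c6}) that $n\le\ft$ forces $\gd_n\le 0$; this is because $x_n,y_n,z_n\ge 0$, so $\gd_n\ge 1$ would imply $n\ge pqr>\ft$. Now assuming $f(n)\le\lf n/r\rf$, we have $n-f(n)r\ge 0$, whence $pq(z_n + r\gd_n)\ge 0$, giving $\gd_n\ge -z_n/r$. Since $0\le z_n<r$, this yields $\gd_n>-1$, and combined with $\gd_n\le 0$ we conclude $\gd_n=0$, i.e., $\chi(n)=1$.

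There is no real obstacle here: once the identity $n-f(n)r=pq(z_n+r\gd_n)$ is written down, the lemma follows from the size constraints $0\le z_n<r$ and $\gd_n\le 0$. The only subtlety worth highlighting explicitly is the role of the hypothesis $n\le\ft$, which is precisely what pins $\gd_n$ to the interval $(-1,0]$ of integers, namely $\{0\}$.
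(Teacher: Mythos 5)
Your proof is correct and fully self-contained. The paper itself does not prove \refL2 but merely cites Lemma~2 of \cite{Ba3}, so a direct verification is useful to have. The key identity you isolate,
\[
n - f(n)r = z_npq + \gd_npqr = pq\,(z_n + r\gd_n),
\]
follows immediately from \refE{c4} and \refE{c7}, and both implications then drop out of the constraints $0\le z_n<r$ together with $\gd_n\le 0$ (itself forced by $n\le\ft$, exactly as you argue: $\gd_n\ge1$ would give $n\ge pqr>\ft$). The passage from $f(n)\le n/r$ to $f(n)\le\lf n/r\rf$ is justified because $f(n)=x_nq+y_np$ is an integer. The argument is tight: the hypothesis $n\le\ft$ pins $\gd_n\in\{0,-1,-2,\dots\}$, the assumed inequality gives $\gd_n>-1$, and together these force $\gd_n=0$. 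Nothing is missing, and the reasoning is likely what underlies the cited Lemma 2 of \cite{Ba3} in any case.
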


\begin{proof}
This is equivalent to Lemma 2 in \cite {Ba3}.
\end{proof}

To work with the function $f$, we will find it convenient to introduce some additional notation. First, we put $u=x_1$, $v=y_1$ and $w=z_1$, so that
\begin{equation}\label{c8}
1=uqr+vpr+wpq+\gd_1pqr.
\end{equation}
Second, we shall write $\la n\ra_p$ and $\la n\ra_q$ for the least nonnegative residues of $n\pmod p$ and (mod $q$), respectively. Using this notation we may rewrite the first two congruences in \eqref{c5} as equations 
\begin{equation}\label{c9}
x_n=\la un\ra_p\quad\text{and}\quad y_n=\la vn\ra_q,
\end{equation}
and \eqref{c7} becomes 
\begin{equation}\label{c10}
f(n)=\la un\ra_pq+\la vn\ra_qp.
\end{equation}
In addition to this helpful notation it also helps to keep in mind that
\begin{equation}\label{c11}
f(n)\equiv nr^*\pmod{pq},
\end{equation}
where $r^*$ is the multiplicative inverse of $r\pmod{pq}$.

The chief goal of this section is to prove the following proposition.

\begin{proposition}\label{P5}
Let $\gt=\{p,q,r\}$ be a triple of coprime integers with the smallest element $p$. Put
\begin{equation}\label{c12}
\mu=\min(x_q,x_r,p-x_q,p-x_r)\quad\text{and set}\quad C=\lf\mu/u\rf.
\end{equation}
Assume that $u\le\mu$ (so that $C\ge1$), and that another element of $\gt$, say $q$, satisfies
\begin{equation}\label{c13}
q>p^2\quad\text{and}\quad v>q-q/p^2.
\end{equation}
Then $\aat\supset\{\,n:|n|\le C\,\}$.
\end{proposition}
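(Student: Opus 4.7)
The plan is to apply \refL{1}, $a_m = S(m) - S(m-q) - S(m-r) + S(m-q-r)$, at a family of indices $m_k$ designed so that $a_{m_k} = k$ for each integer $k$ with $|k| \le C$. Two preparatory calculations set the stage. Reducing \eqref{c8} modulo $pq$ gives $r(uq + vp) \equiv 1 \pmod{pq}$, hence $uq + vp \equiv r^* \pmod{pq}$; the hypotheses $q > p^2$ and $v > q - q/p^2$ force $uq + vp > q(p+1-1/p) > pq$, while trivially $uq + vp < 2pq$, so
\[
r^* = uq + vp - pq.
\]
Using this, for $1 \le n \le C$ one has $\langle un\rangle_p = un$ (since $un \le \mu < p$) and $\langle vn\rangle_q = q - n(q-v)$ (since $n(q-v) < Cq/p^2 < q$), whence \eqref{c10} simplifies to $f(n) = pq + nr^*$. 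Because $\lfloor n/r\rfloor = 0$ for $n < r$, \refL{2} yields $\chi(n) = 0$ throughout $[1,C]$. The same computation, shifted by $Nqr$ with $\chi(Nqr) = 1$ checked directly from $f(Nqr) = Nq = \lfloor Nqr/r\rfloor$, shows $\chi$ vanishes on each interval $(Nqr, Nqr + C]$ as well.

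A parallel analysis of $n = Nqr - j$ is then carried out. When $N \ge uj$, $\langle un\rangle_p = N - uj$ and $\langle vn\rangle_q = j(q-v)$, so the condition in \refL{2} reduces, after using the formula for $r^*$, to $jr^* \ge 1$, which always holds; when $j > N/u$, the relevant inequality instead becomes $jr^* \ge pq + 1$, which must fail for $j \le p-1$ given the bound $r^* < pq/2$ (itself a consequence of $u \le \mu \le p/2$). Hence, for $m = Nqr$ the window $(m-p, m]$ contains exactly $1 + \min(\lfloor N/u\rfloor, C)$ spikes, giving
\[
S(Nqr) = 1 + \min(\lfloor N/u\rfloor, C).
\]
Taking $N = uk$ with $0 \le k \le C$ produces $S(Nqr) = k+1$. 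To recover $a_{Nqr} = k$, I would then show that the three "background" terms $S(Nqr - q)$, $S(Nqr - r)$, $S(Nqr - q - r)$ contribute $+1$ in total independently of $N$, so the alternating sum of \refL{1} collapses to $k$. The corresponding negative coefficients $-k$ are then obtained either by the palindromic symmetry $a_m = a_{\varphi(\tau)-m}$ or by a mirror-image construction centered just below each base spike $Nqr$, and $a_m = 0$ is given trivially by any $m$ in the range $p \le m < q$, where \eqref{c3} immediately forces it.

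The main obstacle will be this third step: controlling the three shifted windows $S(Nqr - q)$, $S(Nqr - r)$, $S(Nqr - q - r)$ uniformly as $N$ traverses $0, u, 2u, \ldots, uC$. Generically each lies far from any multiple of $qr$, but each could in principle pick up spikes from integer combinations $iqr + ypr + zpq$ with mixed coefficients. One must verify that such spikes, if present, fall identically into these windows—so they cancel in the alternating sum—or do not appear at all. The role of $\mu = \min(x_q, x_r, p-x_q, p-x_r)$ is precisely to keep the three windows "safe" under this range of shifts: $\mu$ measures how far the $j = 0$ base spike $Nqr$ can be displaced (in terms of $\langle un\rangle_p$) before a spike drifts across one of the window boundaries of length $p$, and dividing by $u$ gives the bound $C = \lfloor \mu/u\rfloor$ on the number of admissible shifts. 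Verifying this last point cleanly is expected to be the technical heart of the argument.
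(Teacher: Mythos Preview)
Your computation of $r^* = uq + vp - pq$ and the analysis of $f$ on $[1,C]$ are correct and in the spirit of the paper, but there are two genuine gaps.

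\textbf{The formula for $S(Nqr)$ is wrong for $u>1$.} Your claim that ``$jr^*\ge pq+1$ must fail for $j\le p-1$ given $r^*<pq/2$'' is false already for $j=3$ whenever $r^*>pq/3$. More to the point, the identity $f(Nqr-j)=pq+Nq-jr^*$ you use is valid only while $0<uj-N\le p$; once $uj-N>p$ the residue $\langle N-uj\rangle_p$ wraps again and $\chi$ can return to $1$. For a concrete failure take $p=7$, $x_q=x_r=3$ (so $u=2$, $\mu=3$, $C=1$) and any $q,v$ meeting \eqref{c13}: with $N=2$ one finds $\chi(2qr-j)=1$ at $j=0,1,4$, so $S(2qr)=3$, not the value $2$ your formula predicts.

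\textbf{The paper's strategy is different and avoids both this issue and the ``background'' step you flag as the main obstacle.} The paper first invokes the known fact (\cite{GM},\cite{Ba1}) that $\aat$ is always a set of \emph{consecutive} integers; hence it suffices to produce one coefficient $\ge C$ and one $\le -C$, rather than hit every value individually. It then fixes the single base point $\ell=(C-1)u\cdot qr$ and, crucially, shifts the window to $m=\ell+p-C$ rather than $m=\ell$. The window $(m-p,m]$ then covers $\ell-(C-1),\ldots,\ell+(p-C)$, so one only needs $\chi(\ell-i)=1$ for $0\le i\le C-1$ and $\chi(\ell+i)=0$ for $0<i<p$; the multi-wrap problem above never arises. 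For the shifted sums the paper shows, using that $\mu=\min(x_{\pm q},x_{\pm r})$ bounds how far a shift by $\pm q$ or $\pm r$ can move the $x$-coordinate, that $S(m-q')=S(m\pm r)=0$ outright; the remaining term $S(m-q'\pm r)\ge 0$ is simply dropped, yielding $\pm a_{m'}\ge C$ for an appropriate $m'$. Your proposed route of proving the background equals exactly $+1$ for every $N=uk$ is therefore unnecessary, and in fact cannot hold uniformly: at $N=0$ it would force $a_0=0$, whereas $a_0=1$.
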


\begin{proof}
We know (\cite{GM}, \cite{Ba1}) that the set of coefficients of $\qt$ is just a string of consecutive integers, that is
\[
\aat=\{\,n: A^-(\gt)\le n\le A^+(\gt)\,\},
\]
where $A^{\pm}(\gt)$ denotes the largest/smallest coefficients. Whence the claim of the proposition will follow if we simply exhibit a large coefficient $a_m\ge C$ and a small coefficient $a_m\le-C$. This is what we do below.

Considering \eqref{c4} with $n=1,q,r$ modulo $p$ gives (recall $u=x_1$)
\begin{equation}\label{c14}
uqr\equiv1,\quad x_qr\equiv1,\quad x_rq\equiv1\pmod p,
\end{equation}
and doing the same modulo $q$ gives
\begin{equation}\label{c15}
vpr\equiv1,\quad y_pr\equiv1,\quad y_rp\equiv1\pmod q.
\end{equation}
We see that $u\equiv x_qx_r\pmod p$ and $v\equiv y_py_r\pmod q$, or
\begin{equation}\label{c16}
u=\la x_qx_r\ra_p\quad\text{and}\quad v=\la y_py_r\ra_q.
\end{equation}
Let us also note the equations
\begin{equation}\label{c17}
x_rq+y_rp=pq+1\quad\text{and}\quad x_{-r}q+y_{-r}p=pq-1,
\end{equation}
which follow from \eqref{c4} with $n=\pm r$, as well as $x_{-r}=p-x_r$ and $y_{-r}=q-y_r$. Of course, there are the corresponding equations involving the parameters $x_q$ and $x_{-q}=p-x_q$, but they will not play any role in our argument. But all four parameters $x_q, x_{-q}, x_r, x_{-r}$ do. In the first place, $\mu$ is defined to be the minimum of these parameters \eqref{c12}. Furthermore, at this stage we want to identify the smaller of the two for each pair $x_{\pm q}$ and $x_{\pm r}$. We let $q'$ denote the coice of $\pm q$ corresponding to $x_{q'}<x_{-q'}$ ($\ne$ by \eqref{c14}) and do the same with $\pm r$: $r'=\pm r$ with $x_{r'}< x_{-r'}$. Note that
\begin{equation}\label{c18}
x_{q'}, x_{r'}< p/2\quad\text{and}\quad\mu=\min(x_{q'},x_{r'}).
\end{equation}

Next, set
\begin{equation}\label{c19}
a=(C-1)u\quad\text{and}\quad \ell=aqr.
\end{equation}
Since
\begin{equation}\label{c20}
a< Cu\le\mu< p/2,
\end{equation}
note that $x_{\ell}=a$ and $f(\ell)=aq$. We now show that
\begin{align}
\chi(\ell-i)=1, &\quad\text{for $0\le i\le C-1$, and}  \label{c21}\\
\chi(\ell+i)=0, &\quad\text{for $0<i<p$.} \label{c22}
\end{align}
By \refL2 and \eqref{c11}, \eqref{c21} is equivalent to $f(\ell-i)\le aq$. By \eqref{c9} and \eqref{c10}, we have
\[
f(\ell-i)=\la a-ui\ra_pq+\la-vi\ra_qp=\la a-ui\ra_pq+\la v'i\ra_qp,
\]
where
\begin{equation}\label{c23}
v'=q-v<q/p^2,
\end{equation}
by \eqref{c13}. Whence
\[
f(\ell-i)=(a-ui)q+v'ip=aq-i(uq-v'p)\le aq,
\]
implying \eqref{c21}. We handle \eqref{c22} in a similar fashion:
\[\begin{aligned}
f(\ell+i) &=\la a+ui\ra_pq+\la vi\ra_qp\ge\la vi\ra_qp \\
&=(q-iv')p>pq-q>aq,
\end{aligned}\]
and \eqref{c22} follows by \refL2.

Our next step is to show that
\begin{equation}\label{c24}
\chi(\ell-r'+i)=0,\quad\text{for $-C<i<p$.}
\end{equation}
Since $\lf\frac{\ell-r'+i}r\rf\le aq-r'/r$, this equality follows from the inequality\\ $f(\ell-r'+i)>aq-\frac{r'}r$, by \refL2. In the range $-C<i\le0$, we obtain
\begin{align*}
f(\ell-r'+i) &\ge\la a-r'u+ui\ra_pq=\la a-x_{r'}+ui\ra_pq\\
&=(p-x_{r'}+a+ui)q\ge(p-x_{r'})q\\
&\ge(x_{r'}+1)q>(a+1)q,
\end{align*}
by \eqref{c10}, \eqref{c9}, \eqref{c18} and \eqref{c20}. In the remaining range,
\begin{align*}
f(\ell-r'+i) &\ge\la-r'v+iv\ra_qp=\la-y_{r'}-iv'\ra_qp\\
&>(q-y_{r'}-pv')p>pq-y_{r'}p-q,
\end{align*}
by \eqref{c10}, \eqref{c9} and \eqref{c23}. Combining this with \eqref{c17} yields
\[
f(\ell-r'+i)>x_{r'}q-r'/r-q\ge aq-r'/r,
\]
by \eqref{c18} and \eqref{c20}, completing the verification of \eqref{c24}.

In addition to \eqref{c24} we will also need
\begin{equation}\label{c25}
\chi(\ell+r'+i)=0,\quad\text{for $-C<i<p$.}
\end{equation}
The verification here is nearly identical and slightly simpler. Since $\lf\frac{\ell+r'+i}r\rf\le aq+1$, we only need to show that $f(\ell+r'+i)>aq+1$. In the range $-C<i\le0$, we get
\begin{align*}
f(\ell+r'+i) &\ge\la a+x_{r'}+ui\ra_pq=(x_{r'}+a+ui)q\\
&\ge x_{r'}q\ge(a+1)q,
\end{align*}
and in the remaining range,
\begin{align*}
f(\ell+r' &+i) \ge\la y_{r'}+vi\ra_qp=(y_{r'}-v'i)p\ge y_{r'}p-q+1\\
&=pq-x_{r'}q+r'/r-q+1\ge(p-x_{r'}-1)q\ge x_{r'}q\ge(a+1)q,
\end{align*}
as desired.

Our final step in this calculation is an analogue of \eqref{c24} and \eqref{c25},
\begin{equation}\label{c26}
\chi(\ell-q'+i)=0,\quad\text{for $-C<i<p$,}
\end{equation}
which we prove in the exactly the same manner. Since
\[
\lf\frac{\ell-q'+i}r\rf\le aq+\lf\frac{q+p}r\rf<aq+q,
\]
\eqref{c26} follows from $f(\ell-q'+i)\ge aq+q$. For $-C<i\le0$, we get
\begin{align*}
f(\ell-q'+i) &\ge\la a-q'u+iu\ra_pq=\la a-x_{q'}+iu\ra_pq\\
&=(p-x_{q'}+a+iu)q\ge(p-x_{q'})q\\
&\ge(x_{q'}+1)q>(a+1)q,
\end{align*}
and for $0\le i<p$, we get
\begin{align*}
f(\ell-q'+i) &\ge\la vi\ra_qp=(q-v'i)p>pq-q\\
&\ge(a+1)q+(p-1-\mu)q>(a+1)q,
\end{align*}
as required.

We are now ready to exhibit large/small coefficients of $\qt$. Put $m=\ell+p-C$ and observe that \eqref{c21} and \eqref{c22} imply that
\[
S(m)=\sum_{m-p<n\le m}\chi(n)=\sum_{-C<i\le p-C}\chi(\ell+i)=C,
\]
while \eqref{c24}-\eqref{c26} imply that
\[
S(m\pm r)=S(m-q')=0.
\]
Therefore
\begin{equation}\label{c27}
S(m)-S(m-q')-S(m\pm r)+S(m-q'\pm r)\ge C.
\end{equation}
Now, if $q'=q$ then, by \refL1, the left side in \eqref{c27} with the choice $-r$ equals to $a_m$, and with the choice $+r$ equals to $-a_{m+r}$. And if $q'=-q$, then the two choices $\mp r$ correspond to the coefficients $-a_{m+q}$ and $a_{m+q+r}$, respectively. This completes the proof of the proposition.
\end{proof}

\section{Proof of Theorem 1}

The following heuristic guides what one might expect from an arbitrary family of polynomials $\unip$. Let $h$ be the multiplicative order of $q$ modulo $p$, so $h\mid p-1$. There is the largest exponent $a_0$ such that $h$ is the order of $q$ modulo $p^{a_0}$. For all $a\ge a_0$, the order of $q$ modulo $p^a$ is $p^{a-a_0}h$. Therefore, for $P=p^a$ with $a\ge a_0$, there are $c\cdot P$ distinct residues $q^i$ modulo $P$, where the constant $c$ depends only on $q$. Similarly, there are $c'\cdot P$ distinct residues of $r^j$ modulo $P$. One might expect that if $P$ is sufficiently large, powers $q^i$ and $r^j$ cover the reduced residue system modulo $P$ sufficiently well. Fixing a particular power $Q=q^b$, one also expects $r^j$ to cover the reduced residue system modulo $Q$ pretty well. This suggests that as exponents $b$ and $c$ vary, sets of coefficients of polynomials $\uniP$ resemble those of ``typical'' $Q_{\{P,\cdot,\cdot\}}$.

The snag in this heuristic is the question of distribution of exponential residues, such as $q^i\pmod P$. Our chief objective in \refP5 was to get a sufficiently flexible result on polynomials $\qt$ to be able to cover our problem for polynomials $\unip$, given what is known about such exponential congruences. More specifically, the key result on exponential congruences we will use to complete our argument is given in \refL3 below. This lemma is only a special case of a result proved by Korobov \cite[Theorem 3]{Ko}. The reader is also referred to Shparlinski's paper \cite{Sh} for a convenient reference to this result. In particular, our formulation of \refL3 follows the exposition in \cite{Sh}.

\begin{lemma}\label{L3}
Let a prime number $p$ and integers $a$ and $g$ relatively prime to $p$ be fixed. For an arbitrary integer $b$ and a positive integers $N$ and $\nu$, let $T(b,N;\nu)$ denote the number of integers $n\in[b+1,b+N]$ such that $n\equiv a\cdot g^i\pmod{p^\nu}$ for some $i$. Now fix a real $\frac12<\gu<1$ and restrict $N$ to be of size $p^{\gu\cdot\nu}<N<p^\nu$. Then for all $b$ and $N$, we have
\[
T(b,N;\nu)\sim c(p,g)\cdot N,
\]
as $\nu\to\infty$, where the constant $c(p,g)$ depends only on $p$ and $g$.
\end{lemma}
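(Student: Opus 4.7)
The plan is to derive this equidistribution by combining a finite Fourier unfolding with a nontrivial cancellation bound on exponential sums along multiplicative orbits, the latter being the deep input furnished by Korobov.

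First I would identify the density $c(p,g)$. For $\nu$ sufficiently large, the multiplicative order of $g$ modulo $p^\nu$ stabilizes to $t_\nu=h\cdot p^{\nu-\nu_0}$, where $\nu_0$ is the last exponent at which the order fails to grow by the expected factor of $p$ when the modulus is lifted, and $h$ is the resulting base order. The coset $a\la g\ra\subset(\mathbb{Z}/p^\nu\mathbb{Z})^*$ then has cardinality $t_\nu$ and density $t_\nu/p^\nu=h/p^{\nu_0}=:c(p,g)$, a positive constant depending only on $p$ and $g$.

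Second, I would unfold $T(b,N;\nu)$ by discrete Fourier analysis on $\mathbb{Z}/p^\nu\mathbb{Z}$. Writing
\[
T(b,N;\nu)=\sum_{i=0}^{t_\nu-1}\mathbf{1}_{(b,b+N]}(ag^i\bmod p^\nu)
\]
and expanding the indicator in additive characters $e_{p^\nu}(k\cdot):=\exp(2\pi i k\cdot/p^\nu)$, the $k=0$ term contributes $t_\nu N/p^\nu\sim c(p,g)N$, and the remainder is
\[
E=\frac{1}{p^\nu}\sum_{k=1}^{p^\nu-1}\widehat{\mathbf{1}}(k)\,S_\nu(ka),\qquad S_\nu(t):=\sum_{i=0}^{t_\nu-1}e_{p^\nu}(tg^i).
\]
It remains to show $E=o(N)$.

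Third, the decisive ingredient is Korobov's bound $|S_\nu(t)|\ll p^{\nu(1-\eta)}$ for $t\not\equiv 0\pmod{p^\nu}$, valid for a positive $\eta$ depending on $p$, $g$, and the chosen $\gu$, and which can be arranged so that $1-\eta<\gu$. Combined with the elementary estimate $|\widehat{\mathbf{1}}(k)|\ll p^\nu/\min(k,p^\nu-k)$ and the logarithmic tail bound $\sum_{k=1}^{p^\nu-1}1/\min(k,p^\nu-k)\ll\log p^\nu$, one obtains $E\ll p^{\nu(1-\eta)}\log p^\nu=o(p^{\gu\nu})=o(N)$, which is exactly what is required.

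The principal obstacle is, naturally, the exponential-sum estimate itself: the genuine content of Korobov's Theorem~3 lies in extracting Weyl-type cancellation in $S_\nu(t)$ via $p$-adic differencing applied to iterates of the map $x\mapsto gx$. Once that bound is in hand, the above Fourier reduction is standard, so I would invoke Korobov's result directly rather than attempt to reproduce its proof.
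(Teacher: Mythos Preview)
The paper supplies no proof of this lemma at all: it simply records the statement as a special case of Korobov's Theorem~3 (in Shparlinski's reformulation) and moves on. So there is nothing to compare beyond noting that both you and the paper ultimately defer to Korobov.

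Your proposal does more than the paper, by sketching the standard Erd\H{o}s--Tur\'an/Weyl mechanism: detect $(b,b+N]$ by additive characters modulo $p^\nu$, isolate the $k=0$ term $t_\nu N/p^\nu\sim c(p,g)N$, and bound the rest via nontrivial cancellation in the orbit sums $S_\nu(t)$. That outline is sound, but two points deserve care. First, frequencies $k$ with $p\mid k$ must be handled level by level: if $k=p^jk'$ with $(k',p)=1$, then $S_\nu(ka)$ becomes $p^j$ copies of an analogous sum modulo $p^{\nu-j}$, so one does not have a single uniform bound for all $k\not\equiv0$; the layered estimate still closes, but it is not the one-line inequality you wrote. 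Second, your phrase ``which can be arranged so that $1-\eta<\theta$'' is carrying the entire content of the lemma. The restriction $\theta>\tfrac12$ in the statement is there precisely because Korobov's bound on $S_\nu(t)$ saves essentially a square root and no more; you cannot ``arrange'' $\eta$, you must quote its actual size. Since your final sentence already concedes that the exponential-sum bound is Korobov's theorem, your Fourier reduction---though correct---does not add independent content over what the paper does by citing the equidistribution result directly.
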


Next we use \refL3 to gain desired control of simultaneous residues $\la q^i\ra_P$, $\la r^j\ra_P$ and $\la q^ir^j\ra_P$. Of course, the notation $\la\cdot\ra_P$ follows the meaning established in \refS3.

\begin{lemma}\label{L4}
Let $0<\gre<1/4$ be fixed. Then for all sufficiently large powers $P=p^a$ we can find exponents $i$ and $j$ such that $\la q^ir^j\ra_P=1$, while
\[
\min\bigl(\la q^i\ra_P, P-\la q^i\ra_P, \la r^j\ra_P, P-\la r^j\ra_P\bigr)>(\textstyle{\frac14}-\gre)P.
\]
\end{lemma}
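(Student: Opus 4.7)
\medskip

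\noindent\textit{Proof proposal.} Writing $h := q^i \bmod P$, the condition $\la q^ir^j\ra_P = 1$ is equivalent to $r^j \equiv h^{-1} \pmod P$, which forces $h$ to lie in the subgroup $H := \la q\ra \cap \la r\ra$ of $(\mathbb{Z}/P\mathbb{Z})^*$; conversely, every $h \in H$ arises from some such pair $(i,j)$. Setting $I_m := \bigl((\tfrac14-\gre)P,\,(\tfrac34+\gre)P\bigr)$, the lemma thus reduces to producing, for all sufficiently large $a$, an element $h \in H$ with both $h$ and $h^{-1}$ lying in $I_m \pmod P$.

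For $p$ odd, $(\mathbb{Z}/P\mathbb{Z})^*$ is cyclic, hence so is $H$. A standard analysis of $p$-adic orders shows that once $a$ is large enough we have $\operatorname{ord}_P(q) = p^{a-a_q}h_q$ and $\operatorname{ord}_P(r) = p^{a-a_r}h_r$, where the constants $a_q, a_r, h_q, h_r$ depend only on $p,q,r$ and $h_q, h_r \mid p-1$. Consequently $|H| = p^{a-\max(a_q,a_r)}\gcd(h_q,h_r)$ and $H$ is generated by $h_0 := q^\kappa$, where
\[
\kappa \;=\; p^{\max(a_q,a_r)-a_q}\,\frac{h_q}{\gcd(h_q,h_r)}
\]
is a \emph{fixed} positive integer independent of $a$. (The case $p=2$ is handled analogously via the decomposition $(\mathbb{Z}/2^a\mathbb{Z})^*\cong\{\pm1\}\times\la 5\ra$.)

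Next I would invoke \refL3 with $g := q^\kappa$ (a fixed integer) and $\nu := a$: for any interval $J \subset [0,P]$ of macroscopic length, $|H \cap J| \sim c(p,g)\,|J|$ as $a\to\infty$. Taking $J = I_m$ and $J = [0,P]$ and dividing yields
\[
\frac{|H \cap I_m|}{|H|} \;\longrightarrow\; \frac{|I_m|}{P} \;=\; \tfrac12 + 2\gre.
\]
Since inversion $h \mapsto h^{-1}$ is a bijection of $H$, the set $B := \{h \in H : h^{-1}\in I_m\}$ has the same cardinality as $A := H \cap I_m$. Inclusion–exclusion inside $H$ then gives
\[
|A \cap B| \;\ge\; 2|A| - |H| \;=\; (4\gre + o(1))\,|H|,
\]
which is strictly positive for all sufficiently large $a$. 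Any $h \in A \cap B$ supplies exponents $i$ and $j$ with $h = q^i$ and $h^{-1} = r^j$ satisfying the conclusion.

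The main technical obstacle is producing a generator of the $a$-varying group $H$ as a \emph{fixed} integer, so that \refL3 is applicable with a single $g$ throughout; this is precisely what the stabilization-of-orders analysis achieves. A secondary concern is the uniformity of Korobov's asymptotic in the interval endpoint $b$ for $b$ that scales with $P$, but this is implicit in the formulation of \refL3.
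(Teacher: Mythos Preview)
Your argument is correct and follows the same strategy as the paper's: reduce to finding $h$ in the cyclic subgroup $H=\la q\ra\cap\la r\ra$ with both $h$ and $h^{-1}$ in the central interval $\jj=\bigl((\tfrac14-\gre)P,(\tfrac34+\gre)P\bigr)$, apply Korobov's \refL3 to see that $|H\cap\jj|/|H|\to\tfrac12+2\gre$, and conclude by the pigeonhole/inclusion--exclusion step (the paper phrases it as $\#\{h\in\jj\}>\#\{h^{-1}\notin\jj\}$, which is the same inequality). The only difference is presentational: the paper parametrizes $H$ via a primitive root $g$ modulo $P$ and the exponent $[\ga,\gb]$, whereas you produce a \emph{fixed} integer generator $q^{\gk}$ via the stabilization-of-orders analysis---your formulation is in fact tidier with respect to the hypothesis of \refL3 that the base $g$ be a fixed integer as $\nu\to\infty$.
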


\begin{proof}
Let $P=p^a$ be an arbitrary but fixed power of $p$. Let us express the multiplicative orders of $q$ and $r$ modulo $p$ in the form $\gf(P)/\ga$, $\gf(P)/\gb$, respectively. Fix a primitive root $g$ modulo $P$. Then
\[
q\equiv g^{\ga s},\ r\equiv g^{\gb t} \pmod P,
\]
for some $s$ and $t$ with $(s,\gf(P)/\ga)=(t,\gf(P)/\gb)=1$. So considering the set of distinct residues $\{q^i\pmod P\}$, $\{r^j\pmod P\}$ and $\{q^ir^j\pmod P\}$ comes to the same thing as considering $\{g^{\ga i}\pmod P\}$, $\{g^{\gb j}\pmod P\}$ and $\{g^{\ga i}g^{\gb j}\pmod P\}$, respectively. Furthermore, one readily verifies that the first requirement in the statement of the lemma, $g^{\ga i}\cdot g^{\gb j}\equiv1\pmod P$, reduces to the requirement
\begin{equation}\label{d1}
g^{[\ga,\gb]i}\cdot g^{[\ga,\gb]j}\equiv1\pmod P,
\end{equation}
where $[\ga,\gb]=\text{lcm}(\ga,\gb)$. But the list of all possible solutions of \eqref{d1} corresponds to letting $i$ run through the range $0\le i<\gf(P)/[\ga,\gb]$ and taking $j=\frac{\gf(P)}{[\ga,\gb]}-i$. Therefore, to complete the proof of the lemma we must show that we can choose $i$ such that all four quantities
\[
\la\pm g^{\pm[\ga,\gb]i}\ra_P>(\textstyle{\frac14}-\gre)P.
\]
(Note that we use the notation $x^{-1}\pmod P$ to denote the multiplicative inverse of $x$.)

In other words, we wish to show that there is $i$ such that
\begin{equation}\label{d2}
\la g^{\pm[\ga,\gb]i}\ra_P\in\jj,
\end{equation}
where $\jj=\bigl((\frac14-\gre)P,\,(\frac34+\gre)P\bigr)$. By \refL3,
\begin{equation}\label{d3}
\#\{\la g^{[\ga,\gb]i}\ra_P\in\jj\}\sim c(p,qr)\cdot(\textstyle{\frac12}+2\gre)P,
\end{equation}
as $P\to\infty$, where the constant $c(p,qr)$ depends only on $p$, $q$ and $r$. Another application of \refL3 gives
\begin{equation}\label{d4}
\#\{\la g^{-[\ga,\gb]i}\ra_P\not\in\jj\}\sim c(p,qr)\cdot(\textstyle{\frac12}-2\gre)P,
\end{equation}
as $P\to\infty$. Combining \eqref{d3} and \eqref{d4} shows that if $P=p^a$ is sufficiently large, then \eqref{d2} must hold for some $i$, completing the proof of the lemma.
\end{proof}

\begin{proof}[Proof of Theorem 1]
Fix $\gre\in(0,1/4)$ and a sufficiently large power $P=p^a$, so that \refL4 applies. Fix a pair of exponents $i=i_1$ and $j=j_1$ satisfying the conclusions of \refL4. Now fix a power of $q$: $Q=q^{k_1\gf(P)-i_1}$, where $k_1$ is an arbitrary but fixed integer and $Q>P^6$. At this stage we are not quite ready to fix $R$, but it will be of the form
\begin{equation}\label{d5}
R=r^{k\gf(P)-j_1},
\end{equation}
for some $k$ to be selected.

In preparation of application of \refP5 to the polynomial $\uniP=\QT$, recall the terminology introduced in \refS3. In particular, in view of
\[
Q\cdot q^{i_1}\equiv1\equiv R\cdot r^{j_1}\pmod P,
\]
we see that 
\[
x_R=\la q^{i_1}\ra_P,\quad x_Q=\la r^{j_1}\ra_P\quad \text{and}\quad u=\la q^{i_1}\cdot r^{j_1}\ra_P=1.
\]
Therefore the quantity $C$ defined in \eqref{c12} satisfies
\[
C=\min\bigl(\la\pm q^{i_1}\ra_P, \la\pm r^{j_1}\ra_P\bigr)>(\textstyle{\frac14}-\gre)P,
\]
by \refL4. The theorem now follows from \refP5 provided we make additional arrangements to fulfill the condition \eqref{c13}.

Recall that $v=\la y_P\cdot y_R\ra_Q$ and that, since we already fixed $P$ and $Q$, $y_R$ is already determined and satisfies the congruence $P\cdot y_R\equiv1\pmod Q$. Recall also that $Q>P^6$, whence the interval $\jj'=(Q-Q/P^2, Q)$ is of length $>Q^{2/3}$. Now let
\begin{equation}\label{d6}
d=\gcd(\gf(P),\gf(Q))=\gcd(p-1,q-1),
\end{equation}
and consider the solubility in the variable $t$ of the congruence
\begin{equation}\label{d7}
\la y_R\cdot r^{j_1}\cdot r^{dt}\ra_Q\in\jj'.
\end{equation}
Assuming that $Q$ is sufficiently large (i.e., $k_1$ is large enough), another application of \refL3 (with $a=y_Rr^{j_1}$ and $g=r^d$) shows that solutions exist. So let us fix one particular solution $t=t_2$. By \eqref{d6}, the congruence
\[
k\gf(P)+dt_2\equiv0\pmod{\gf(Q)}
\]
is also soluble and we fix a particular solution $k=k_2>0$. With all this to hand we now fix $R=r^{k_2\gf(P)-j_1}$. Put $j_2=j_1+dt_2$ and observe that $R\cdot r^{j_2}\equiv1\pmod Q$. Whence $y_p=\la r^{j_2}\ra_Q$ and, by \eqref{d7}, $v=\la y_Ry_P\ra_Q$ satisfies \eqref{c13}, as required.
\end{proof}

\end{document}